\title{Proximity for Sums of Composite Functions\tnoteref{t1}}
\author[plc]{Patrick L. Combettes\corref{cor1}}
\ead{plc@math.jussieu.fr}
\author[dd]{D\hspace{-1.6ex}\raise 0.4ex\hbox{-}\hspace{.8ex}inh D\~ung}
\ead{dinhdung@vnu.edu.vn}
\author[bcv]{B$\grave{\text{\u{a}}}$ng C\^ong V\~u}
\ead{vu@ann.jussieu.fr}
\address[plc]{UPMC Universit\'e Paris 06, 
Laboratoire Jacques-Louis Lions -- UMR 7598,
75005 Paris, France.\\
Tel.: +33 1 4427 6319,
Fax:  +33 1 4427 7200,
plc@math.jussieu.fr}
\address[dd]{Vietnam National University, 
Information Technology Institute, Hanoi, Vietnam}
\address[bcv]{UPMC Universit\'e Paris 06, 
Laboratoire Jacques-Louis Lions -- UMR 7598,
75005 Paris, France.}
\newcommand{\scal}[2]{{\left\langle{{#1}\mid{#2}}\right\rangle}}
\newcommand{\menge}[2]{\big\{{#1}~\big |~{#2}\big\}}
\newcommand{\BL}{\ensuremath{\EuScript B}\,}
\newcommand{\HHH}{\ensuremath{\boldsymbol{\mathcal H}}}
\newcommand{\GGG}{\ensuremath{\boldsymbol{\mathcal G}}}
\newcommand{\RX}{\ensuremath{\left]-\infty,+\infty\right]}}
\newcommand{\HH}{\ensuremath{{\mathcal H}}}
\newcommand{\GG}{\ensuremath{{\mathcal G}}}
\newcommand{\emp}{\ensuremath{{\varnothing}}}
\newcommand{\prox}{\ensuremath{\operatorname{prox}}}
\newcommand{\Id}{\ensuremath{\operatorname{Id}}\,}
\newcommand{\cone}{\ensuremath{\operatorname{cone}}}
\newcommand{\RR}{\ensuremath{\mathbb{R}}}
\newcommand{\RPP}{\ensuremath{\left]0,+\infty\right[}}
\newcommand{\NN}{\ensuremath{\mathbb N}}
\newcommand{\cart}{\ensuremath{\raisebox{-0.5mm}{\mbox{\LARGE{$\times$}}}}\!}
\newcommand{\pinf}{\ensuremath{{+\infty}}}
\newcommand{\dom}{\ensuremath{\operatorname{dom}}}
\newcommand{\sri}{\ensuremath{\operatorname{sri}}}
\newcommand{\reli}{\ensuremath{\operatorname{ri}}}
\newcommand{\spc}{\ensuremath{\overline{\operatorname{span}}\,}}
\newcommand{\spa}{\ensuremath{{\operatorname{span}}\,}}
\newcommand{\inte}{\ensuremath{\operatorname{int}}}
\newtheorem{theorem}{Theorem}[section]
\newtheorem{corollary}[theorem]{Corollary}
\newtheorem{problem}[theorem]{Problem}
\theoremstyle{plain}{\theorembodyfont{\rmfamily}%
\newtheorem{algorithm}[theorem]{Algorithm}}
\theoremstyle{plain}{\theorembodyfont{\rmfamily}%
\newtheorem{remark}[theorem]{Remark}}
\numberwithin{equation}{section}
\begin{document}

\begin{abstract}
We propose an algorithm for computing the proximity operator of a
sum of composite convex functions in Hilbert spaces and investigate 
its asymptotic behavior. Applications to best approximation and image
recovery are described.
\end{abstract}

\begin{keyword}
Best approximation \sep 
convex optimization \sep 
duality \sep 
image recovery \sep 
proximity operator \sep 
proximal splitting algorithm
\end{keyword}
\maketitle

\section{Introduction}
Let $\HH$ be a real Hilbert space with scalar product 
$\scal{\cdot}{\cdot}$ and associated norm $\|\cdot\|$. The best 
approximation to a point $z\in\HH$ from a nonempty closed convex 
set $C\subset\HH$ is the point $P_Cz\in C$ that satisfies 
$\|P_Cz-z\|=\min_{x\in C}\|x-z\|$. The induced best approximation
operator $P_C\colon\HH\to C$, also called the projector onto $C$,
plays a central role in several branches of applied 
mathematics \cite{Deut01}. 
If we designate by $\iota_C$ the indicator function of $C$, i.e.,
\begin{equation}
\label{e:iota}
\iota_C\colon x\mapsto
\begin{cases}
0,&\text{if}\;\;x\in C;\\
\pinf,&\text{if}\;\;x\notin C,
\end{cases}
\end{equation}
then $P_Cz$ is the solution to the minimization problem 
\begin{equation}
\label{e:proj}
\underset{x\in\HH}{\mathrm{minimize}}\;\;\iota_C(x)+\frac12\|x-z\|^2.
\end{equation}
Now let $\Gamma_0(\HH)$ be the class of lower semicontinuous convex 
functions $f\colon\HH\to\RX$ such that
$\dom f=\menge{x\in\HH}{f(x)<\pinf}\neq\emp$. In \cite{Mor62b}
Moreau observed that, for every function $f\in\Gamma_0(\HH)$, 
the proximal minimization problem 
\begin{equation}
\label{e:prox}
\underset{x\in\HH}{\mathrm{minimize}}\;\;f(x)+\frac12\|x-z\|^2
\end{equation}
possesses a unique solution, which he denoted by $\prox_fz$.
The resulting proximity operator $\prox_f\colon\HH\to\HH$ 
therefore extends the notion of a best approximation operator
for a convex set. This fruitful concept has become a central tool
in mechanics, variational analysis, optimization, and signal 
processing, e.g., \cite{Alar06,Banf10,Rock09}.

Though in certain simple cases closed-form expressions are available
\cite{Banf10,Smms05,More65}, computing $\prox_fz$ in numerical
applications is a challenging task. The objective of this paper is 
to propose a splitting algorithm to compute proximity operators in 
the case when $f$ can be decomposed as a sum of composite functions. 

\begin{problem}
\label{prob:1}
Let $z\in\HH$ and let $(\omega_i)_{1\leq i\leq m}$ be reals in 
$\left]0,1\right]$ such that $\sum_{i=1}^m\omega_i=1$. 
For every $i\in\{1,\ldots,m\}$, let $(\GG_i,\|\cdot\|_{\GG_i})$ be 
a real Hilbert space, let $r_i\in\GG_i$, let $g_i\in\Gamma_0(\GG_i)$,
and let $L_i\colon\HH\to\GG_i$ be a nonzero bounded linear operator.
The problem is to 
\begin{equation}
\label{e:prob1}
\underset{x\in\HH}{\mathrm{minimize}}\;\;
\sum_{i=1}^m\omega_ig_i(L_ix-r_i)+\frac12\|x-z\|^2.
\end{equation}
\end{problem}

The underlying practical assumption we make is that the proximity
operators $(\prox_{g_i})_{1\leq i\leq m}$ are implementable 
(to within some quantifiable error). We are therefore aiming
at devising an algorithm that uses these operators separately.
Let us note that such splitting algorithms are already available 
to solve Problem~\ref{prob:1} under certain restrictions.
\begin{enumerate}
\item[A)]
Suppose that $\GG_1=\HH$, that $L_1=\Id$, that the functions 
$(g_i)_{2\leq i\leq m}$ are differentiable everywhere with a 
Lipschitz continuous gradient, and that $r_i\equiv 0$. Then 
\eqref{e:prob1} reduces to the minimization of the sum of 
$f_1=g_1\in\Gamma_0(\HH)$ and of the smooth function 
$f_2=\sum_{i=2}^m\omega_ig_i\circ L_i+\|\cdot-z\|^2/2$,
and it can be solved by the forward-backward 
algorithm \cite{Smms05,Tsen91}.
\item[B)]
The methods proposed in \cite{Joca09} address the case
when, for every $i\in\{1,\ldots,m\}$, $\GG_i=\HH$, $L_i=\Id$, 
and $r_i=0$. 
\item[C)]
The method proposed in \cite{Svva10} addresses the case
when $m=2$, $\GG_1=\HH$, and $L_1=\Id$, and $r_1=0$. 
\end{enumerate}
The restrictions imposed in A) are quite stringent since many
problems involve at least two nondifferentiable potentials.
Let us also observe that since, in general, there is no explicit 
expression for $\prox_{g_i\circ L_i}$ in terms of 
$\prox_{g_i}$ and $L_i$, Problem~\ref{prob:1} cannot be 
reduced to the setting described in B). On the other hand, 
using a product space reformulation, we shall show that the setting 
described in C) can be exploited to solve Problem~\ref{prob:1}
using only approximate implementations of the operators
$(\prox_{g_i})_{1\leq i\leq m}$.
Our algorithm is introduced in Section~\ref{sec:2}, where we also
establish its convergence properties.
In Section~\ref{sec:3}, our results are applied to best 
approximation and image recovery problems. 

Our notation is standard.
$\BL(\HH,\GG)$ is the space of bounded linear operators from $\HH$ 
to a real Hilbert space $\GG$. The adjoint of $L\in\BL(\HH,\GG)$ is 
denoted by $L^*$. The conjugate of $f\in\Gamma_0(\HH)$ 
is the function $f^*\in\Gamma_0(\HH)$ defined by 
$f^*\colon u\mapsto\sup_{x\in\HH}(\scal{x}{u}-f(x))$. The
projector onto a nonempty closed convex set $C\subset\HH$ 
is denoted by $P_C$. The strong relative interior of a 
convex set $C\subset\HH$ is 
\begin{multline}
\label{e:sri}
\sri C=\menge{x\in C}{\cone (C-x)=\spc (C-x)},\\
\quad\text{where}\quad
\cone C=\bigcup_{\lambda>0}\menge{\lambda x}{x\in C},
\end{multline}
and the relative interior of $C$ is 
$\reli C=\menge{x\in C}{\cone (C-x)=\spa (C-x)}$.
We have $\inte C\subset\sri C\subset\reli C\subset C$ and, if $\HH$ is 
finite-dimensional, $\reli C=\sri C$.
For background on convex analysis, see \cite{Zali02}.

\section{Main result}
\label{sec:2}

To solve Problem~\ref{prob:1}, we propose the following algorithm.
Its main features are that each function $g_i$ is activated 
individually by means of its proximity operator, and that the 
proximity operators can be evaluated simultaneously. It is important
to stress that the functions $(g_i)_{1\leq i\leq m}$ and the operators
$(L_i)_{1\leq i\leq m}$ are used at separate steps in the algorithm,
which is thus fully decomposed. In addition, an error $a_{i,n}$ is 
tolerated in the evaluation of the $i$th proximity operator at 
iteration $n$. 

\begin{algorithm}
\label{algo:2}
For every $i\in\{1,\ldots,m\}$, let $(a_{i,n})_{n\in\NN}$ be a sequence 
in $\GG_i$.
\begin{equation}
\label{e:main2}
\begin{array}{l}
\operatorname{Initialization}\\
\left\lfloor
\begin{array}{l}
\rho=\big(\max_{1\leq i\leq m}\|L_i\|\big)^{-2}\\[1mm]
\varepsilon\in\left]0,\min\{1,\rho\}\right[\\[1mm]
\operatorname{For}\;i=1,\ldots,m\\
\left\lfloor
\begin{array}{l}
v_{i,0}\in\GG_i\\
\end{array}
\right.\\[1mm]
\end{array}
\right.\\[10mm]
\operatorname{For}\;n=0,1,\ldots\\
\left\lfloor
\begin{array}{l}
x_n=z-\sum_{i=1}^m\omega_iL_i^*v_{i,n}\\[1mm]
\gamma_n\in\left[\varepsilon,2\rho-\varepsilon\right]\\[1mm]
\lambda_n\in\left[\varepsilon,1\right]\\
\operatorname{For}\;i=1,\ldots,m\\
\left\lfloor
\begin{array}{l}
v_{i,n+1}=v_{i,n}+\lambda_n\Big(\prox_{\gamma_n g_i^*}
\big(v_{i,n}+\gamma_n(L_ix_n-r_i)\big)+a_{i,n}-v_{i,n}\Big).
\end{array}
\right.\\[2mm]
\end{array}
\right.\\[2mm]
\end{array}
\end{equation}
\end{algorithm}

Note that an alternative implementation of \eqref{e:main2}
can be obtained via Moreau's decomposition formula in a real
Hilbert space $\GG$ \cite[Lemma~2.10]{Smms05}
\begin{equation}
\label{e:2010-06-15b}
(\forall g\in\Gamma_0(\GG))(\forall\gamma\in\RPP)(\forall v\in\GG)
\quad\prox_{\gamma g^*}v=v-\gamma\prox_{\gamma^{-1}g}(\gamma^{-1}v).
\end{equation}
We now describe the asymptotic behavior of Algorithm~\ref{algo:2}.

\begin{theorem}
\label{t:2}
Suppose that
\begin{equation}
\label{e:hanoi2010-05-27a}
(r_i)_{1\leq i\leq m}\in\sri
\menge{(L_ix-y_i)_{1\leq i\leq m}}{x\in\HH, 
(y_i)_{1\leq i\leq m}\in\cart_{i=1}^m\dom g_i}
\end{equation}
and that
\begin{equation}
\label{e:2010-06-15a}
(\forall i\in\{1,\ldots,m\})\quad\sum_{n\in\NN}\|a_{i,n}\|_{\GG_i}<\pinf. 
\end{equation}
Furthermore, let $(x_n)_{n\in\NN}$, $(v_{1,n})_{n\in\NN}$, \ldots,
$(v_{m,n})_{n\in\NN}$ be sequences generated by 
Algorithm~\ref{algo:2}.
Then Problem~\ref{prob:1} possesses a unique solution $x$ and
the following hold.
\begin{enumerate}
\item
\label{t:2i}
For every $i\in\{1,\ldots,m\}$, $(v_{i,n})_{n\in\NN}$ converges 
weakly to a point $v_i\in\GG_i$. Moreover, $(v_i)_{1\leq i\leq m}$ 
is a solution to the minimization problem 
\begin{equation}
\label{e:prob2}
\underset{v_1\in\GG_1,\ldots,\, v_m\in\GG_m}
{\mathrm{minimize}}\;\;\frac12\left\|z-\sum_{i=1}^m
\omega_iL_i^*v_i\right\|^2+\sum_{i=1}^m\omega_i
\big(g_i^*(v_i)+\scal{v_i}{r_i}\big),
\end{equation}
and $x=z-\sum_{i=1}^m\omega_iL_i^*v_i$.
\item
\label{t:2ii}
$(x_n)_{n\in\NN}$ converges strongly to $x$.
\end{enumerate}
\end{theorem}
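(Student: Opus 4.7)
The plan is to reduce Problem~\ref{prob:1} to the single-composite setting C) discussed in the introduction via a product space construction, and then to invoke the convergence theorem of \cite{Svva10}.

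First, I would form the Hilbert direct sum $\GGG=\cart_{i=1}^m\GG_i$ endowed with the weighted inner product $\scal{\boldsymbol{u}}{\boldsymbol{v}}_{\GGG}=\sum_{i=1}^m\omega_i\scal{u_i}{v_i}_{\GG_i}$, and introduce $L\colon x\mapsto(L_ix)_{1\leq i\leq m}$ in $\BL(\HH,\GGG)$, $\boldsymbol{r}=(r_i)_{1\leq i\leq m}\in\GGG$, and $\boldsymbol{g}\colon\boldsymbol{u}\mapsto\sum_{i=1}^m\omega_ig_i(u_i)$. Routine calculations then yield $\boldsymbol{g}\in\Gamma_0(\GGG)$, $L^*\boldsymbol{v}=\sum_{i=1}^m\omega_iL_i^*v_i$, $\|L\|^2\leq\max_{1\leq i\leq m}\|L_i\|^2=1/\rho$, $\boldsymbol{g}^*(\boldsymbol{v})=\sum_{i=1}^m\omega_ig_i^*(v_i)$, and $\prox_{\gamma\boldsymbol{g}^*}\boldsymbol{v}=(\prox_{\gamma g_i^*}v_i)_{1\leq i\leq m}$. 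Consequently, Problem~\ref{prob:1} rewrites as $\min_{x\in\HH}\boldsymbol{g}(Lx-\boldsymbol{r})+\tfrac12\|x-z\|^2$, and its Fenchel--Rockafellar dual, after regrouping, is exactly \eqref{e:prob2}.

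Next, I would identify Algorithm~\ref{algo:2} with the primal-dual splitting method of \cite{Svva10} applied in $\GGG$. Setting $\boldsymbol{v}_n=(v_{i,n})_{1\leq i\leq m}$ and $\boldsymbol{a}_n=(a_{i,n})_{1\leq i\leq m}$, the iterations in \eqref{e:main2} read literally $x_n=z-L^*\boldsymbol{v}_n$ and $\boldsymbol{v}_{n+1}=\boldsymbol{v}_n+\lambda_n(\prox_{\gamma_n\boldsymbol{g}^*}(\boldsymbol{v}_n+\gamma_n(Lx_n-\boldsymbol{r}))+\boldsymbol{a}_n-\boldsymbol{v}_n)$. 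Three hypotheses must be transferred: (a) $\sum_n\|\boldsymbol{a}_n\|_{\GGG}\leq\sum_i\sqrt{\omega_i}\sum_n\|a_{i,n}\|_{\GG_i}<\pinf$ by \eqref{e:2010-06-15a}; (b) the step sizes satisfy $[\varepsilon,2\rho-\varepsilon]\subset[\varepsilon,2/\|L\|^2-\varepsilon]$; (c) the qualification \eqref{e:hanoi2010-05-27a} reduces to $\boldsymbol{r}\in\sri(\ran L-\dom\boldsymbol{g})$ since $\dom\boldsymbol{g}=\cart_{i=1}^m\dom g_i$, which is the standard Fenchel--Rockafellar condition ensuring the existence of dual solutions and strong duality.

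The convergence theorem of \cite{Svva10} then yields uniqueness of the primal solution $x$ (immediate from strong convexity of $\tfrac12\|\cdot-z\|^2$), weak convergence of $\boldsymbol{v}_n$ to a solution $\boldsymbol{v}=(v_i)_{1\leq i\leq m}$ of the dual problem \eqref{e:prob2}, the primal-dual relation $x=z-L^*\boldsymbol{v}$, and strong convergence $x_n\to x$; reading these statements componentwise in $\GGG$ yields precisely \ref{t:2i} and \ref{t:2ii}. The main obstacle is the product space bookkeeping: the weights $\omega_i$ must be placed in the inner product on $\GGG$ so that $L^*$ and $\prox_{\gamma\boldsymbol{g}^*}$ reproduce, without spurious scalars, the weighted sum $\sum_i\omega_iL_i^*v_i$ and the unweighted componentwise proxes that appear in \eqref{e:main2}, and one must then verify that \eqref{e:hanoi2010-05-27a} translates to the correct qualification condition in $\GGG$. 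Once this dictionary is fixed, the convergence is a direct corollary of the single-composite result.
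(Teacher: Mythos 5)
Your proposal is correct and rests on the same two pillars as the paper's proof --- a weighted product space $\GGG=\GG_1\times\cdots\times\GG_m$ with inner product $\sum_i\omega_i\scal{\cdot}{\cdot}_{\GG_i}$, followed by an appeal to the duality/convergence theorem of \cite{Svva10} --- but your reduction is organized slightly differently. The paper also duplicates the \emph{primal} space: it works in $\HHH=\HH^m$, takes the ``simple'' function of the splitting to be $\boldsymbol{f}=\iota_{\boldsymbol D}$ with $\boldsymbol D$ the diagonal subspace, sets $\boldsymbol{L}\colon(x_i)_i\mapsto(L_ix_i)_i$, and applies \cite[Theorem~3.7]{Svva10} to $\min\,\boldsymbol f(\boldsymbol x)+\boldsymbol g(\boldsymbol L\boldsymbol x-\boldsymbol r)+\tfrac12\|\boldsymbol x-\boldsymbol z\|_{\HHH}^2$; the quadratic term of \eqref{e:prob2} then arises as the Moreau envelope $\widetilde{\boldsymbol f^*}=\tfrac12\|P_{\boldsymbol D}\cdot\|_{\HHH}^2$. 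You instead keep $\HH$ itself, take the simple function to be $0$ (so its prox is the identity and the envelope of its conjugate is $\tfrac12\|\cdot\|^2$), and use the single composite $\boldsymbol g\circ L$ with $L\colon x\mapsto(L_ix)_i$. Both dictionaries produce the identical iteration \eqref{e:main2}, the same norm bound $\|L\|\leq\max_i\|L_i\|$, the same dual problem \eqref{e:prob2}, and the same translation of \eqref{e:hanoi2010-05-27a} into $\boldsymbol r\in\sri(\ran L-\dom\boldsymbol g)$; your version simply dispenses with the diagonal subspace and its projector. The bookkeeping you flag as the main obstacle (placement of the weights so that $L^*\boldsymbol v=\sum_i\omega_iL_i^*v_i$ while $\prox_{\gamma\boldsymbol g^*}$ remains componentwise and unweighted, summability of $\|\boldsymbol a_n\|_{\GGG}$, the step-size interval) all checks out, so your route is a legitimate, marginally leaner instance of the same template.
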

\begin{proof}
Set $f\colon\HH\to\RX\colon x\mapsto\sum_{i=1}^m\omega_ig_i(L_ix-r_i)$.
The assumptions imply that, for every $i\in\{1,\ldots,m\}$, the 
function $x\mapsto g_i(L_ix-r_i)$ is convex and lower semicontinuous.
Hence, $f$ is likewise. On the other hand, it follows from 
\eqref{e:hanoi2010-05-27a} that 
\begin{equation}
(r_i)_{1\leq i\leq m}\in\menge{(L_ix-y_i)_{1\leq i\leq m}}{x\in\HH, 
(y_i)_{1\leq i\leq m}\in\cart_{i=1}^m\dom g_i} 
\end{equation}
and, therefore, that
$\dom f\neq\emp$. Thus, $f\in\Gamma_0(\HH)$ and, as seen in 
\eqref{e:prox}, Problem~\ref{prob:1} possesses a unique 
solution, namely $x=\prox_f z$.

Now let $\HHH$ be the real Hilbert space obtained by endowing 
the Cartesian product $\HH^m$ with the scalar product 
$\scal{\cdot}{\cdot}_{\HHH}\colon(\boldsymbol{x},\boldsymbol{y})\mapsto
\sum_{i=1}^m\omega_i\scal{x_i}{y_i}$, where 
$\boldsymbol{x}=(x_i)_{1\leq i\leq m}$ and 
$\boldsymbol{y}=(y_i)_{1\leq i\leq m}$ denote generic 
elements in $\HHH$. The associated norm is 
\begin{equation}
\label{e:palawan-mai2008}
\|\cdot\|_{\HHH}\colon\boldsymbol{x}\mapsto
\sqrt{\sum_{i=1}^m\omega_i\|x_i\|^2}.
\end{equation}
Likewise, let $\GGG$ denote the real Hilbert space obtained by 
endowing the Cartesian product $\GG_1\times\cdots\times\GG_m$ 
with the scalar product and the associated norm respectively 
defined by
\begin{equation}
\label{e:palawan-mai2008-}
\scal{\cdot}{\cdot}_{\GGG}
\colon(\boldsymbol{y},\boldsymbol{z})\mapsto
\sum_{i=1}^m\omega_i\scal{y_i}{z_i}_{\GG_i}
\quad\text{and}\quad\|\cdot\|_{\GGG}\colon
\boldsymbol{y}\mapsto\sqrt{\sum_{i=1}^m\omega_i\|y_i\|_{\GG_i}^2}.
\end{equation}
Define 
\begin{equation}
\label{e:fgL}
\begin{cases}
{\boldsymbol f}=\iota_{\boldsymbol D},
\quad\text{where}\quad\boldsymbol{D}=\menge{(x,\dots,x)\in\HHH}{x\in\HH}\\
{\boldsymbol g}\colon\GGG\to\RX\colon\boldsymbol{y}\mapsto 
\sum_{i=1}^m\omega_ig_i(y_i)\\
{\boldsymbol L}\colon\HHH\to\GGG\colon\boldsymbol{x}\mapsto
(L_ix_i)_{1\leq i\leq m}\\
\boldsymbol{r}=(r_1,\ldots,r_m)\\
\boldsymbol{z}=(z,\ldots,z).
\end{cases}
\end{equation}
Then $\boldsymbol{f}\in\Gamma_0(\HHH)$, 
$\boldsymbol{g}\in\Gamma_0(\GGG)$, and 
$\boldsymbol{L}\in\BL(\HHH,\GGG)$. Moreover, $\boldsymbol{D}$ 
is a closed vector subspace of $\HHH$ with projector
\begin{equation}
\label{e:PD}
\prox_{\boldsymbol{f}}=
P_{\boldsymbol{D}}\colon\boldsymbol{x}\mapsto
\bigg(\sum_{i=1}^m\omega_ix_i,\ldots,\sum_{i=1}^m\omega_ix_i\bigg)
\end{equation}
and  
\begin{equation}
\label{e:L*}
\boldsymbol{L}^*\colon\GGG\to\HHH\colon\boldsymbol{v}\mapsto
\big(L_i^*v_i\big)_{1\leq i\leq m}.
\end{equation}
Note that \eqref{e:palawan-mai2008-} and \eqref{e:palawan-mai2008} 
yield
\begin{align}
\label{e:2010-06-09e}
(\forall\boldsymbol{x}\in\HHH)\quad
\|\boldsymbol{L}\boldsymbol{x}\|_{\GGG}^2
&=\sum_{i=1}^m\omega_i\|L_ix_i\|_{\GG_i}^2\nonumber\\
&\leq\sum_{i=1}^m\omega_i\|L_i\|^2\|x_i\|^2\nonumber\\
&\leq\Big(\max_{1\leq i\leq m}\|L_i\|^2\Big)
\sum_{i=1}^m\omega_i\|x_i\|^2\nonumber\\
&=\Big(\max_{1\leq i\leq m}\|L_i\|^2\Big)\|\boldsymbol{x}\|^2_{\HHH}.
\end{align}
Therefore, 
\begin{equation}
\label{e:2010-06-09f}
\|\boldsymbol{L}\|\leq\max_{1\leq i\leq m}\|L_i\|.
\end{equation}
We also deduce from \eqref{e:hanoi2010-05-27a} that 
\begin{equation}
\label{e:1938}
\boldsymbol{r}\in\sri\big(\boldsymbol{L}(\dom \boldsymbol{f})-
\dom\boldsymbol{g}\big).
\end{equation}
Furthermore, in view of \eqref{e:palawan-mai2008} and
\eqref{e:fgL}, in the space $\HHH$, 
\eqref{e:prob1} is equivalent to
\begin{equation}
\label{e:prob4}
\underset{\boldsymbol{x}\in\HHH}{\mathrm{minimize}}\;\;
{\boldsymbol f}(\boldsymbol{x})+
{\boldsymbol g}({\boldsymbol L}\boldsymbol{x}-\boldsymbol{r})+
\frac12\|\boldsymbol{x}-\boldsymbol{z}\|_{\HHH}^2.
\end{equation}
Next, we derive from \cite[Proposition~3.3]{Svva10} that the dual 
problem of \eqref{e:prob4} is to
\begin{equation}
\label{e:prob5}
\underset{\boldsymbol{v}\in\GGG}{\mathrm{minimize}}\;\;
\widetilde{\boldsymbol{f}^*}(\boldsymbol{z}-
\boldsymbol{L}^*\boldsymbol{v})+\boldsymbol{g}^*(\boldsymbol{v})
+\scal{\boldsymbol{v}}{\boldsymbol{r}}_{\GGG},
\end{equation}
where $\widetilde{\boldsymbol{f}^*}\colon\boldsymbol{u}\mapsto
\inf_{\boldsymbol{w}\in\HHH}\big(\boldsymbol{f}^*(\boldsymbol{w})+
(1/2)\|\boldsymbol{u}-\boldsymbol{w}\|_{\HHH}^2\big)$ is 
the Moreau envelope of $\boldsymbol{f}^*$. 
Since ${\boldsymbol f}=\iota_{\boldsymbol D}$, we have
$\boldsymbol{f}^*=\iota_{\boldsymbol{D}^\bot}$.
Hence, \eqref{e:palawan-mai2008} and \eqref{e:PD} yield
\begin{equation}
\label{e:2010-06-09a}
(\forall\boldsymbol{u}\in\HHH)\quad
\widetilde{\boldsymbol{f}^*}(\boldsymbol{u})
=\frac12\|\boldsymbol{u}-P_{\boldsymbol{D}^\bot}\boldsymbol{u}\|_{\HHH}^2
=\frac12\|P_{\boldsymbol{D}}\boldsymbol{u}\|_{\HHH}^2
=\frac12\left\|\sum_{i=1}^m\omega_iu_i\right\|^2.
\end{equation}
On the other hand, \eqref{e:palawan-mai2008-} and \eqref{e:fgL} yield
\begin{equation}
\label{e:2010-06-09b}
(\forall\boldsymbol{v}\in\GGG)\quad
\boldsymbol{g}^*(\boldsymbol{v})=
\sum_{i=1}^m\omega_ig_i^*(v_i)\quad\text{and}\quad
\prox_{\boldsymbol{g}^*}\boldsymbol{v}
=\big(\prox_{g_i^*}v_i\big)_{1\leq i\leq m}.
\end{equation}
Altogether, it follows from \eqref{e:L*}, \eqref{e:2010-06-09a}, 
\eqref{e:2010-06-09b}, and \eqref{e:palawan-mai2008-}, that 
\begin{equation}
\label{e:2010-06-09c}
\text{\eqref{e:prob5} is equivalent to \eqref{e:prob2}.}
\end{equation}
Now define
\begin{equation}
\label{e:2010-06-09d}
(\forall n\in\NN)\quad
\begin{cases}
\boldsymbol{x}_n=(x_n,\ldots,x_n)\\
\boldsymbol{v}_n=(v_{1,n},\ldots,v_{m,n})\\
\boldsymbol{a}_n=(a_{1,n},\ldots,a_{m,n}).
\end{cases}
\end{equation}
Then, in view of \eqref{e:fgL}, \eqref{e:PD}, \eqref{e:L*}, 
\eqref{e:2010-06-09f}, and \eqref{e:2010-06-09b}, \eqref{e:main2} 
is a special case of the following routine.
\begin{equation}
\label{e:main1}
\begin{array}{l}
\operatorname{Initialization}\\
\left\lfloor
\begin{array}{l}
\rho=\|{\boldsymbol L}\|^{-2}\\[1mm]
\varepsilon\in\left]0,\min\{1,\rho\}\right[\\[1mm]
{\boldsymbol v}_0\in\GGG\\[1mm]
\end{array}
\right.\\[5mm]
\operatorname{For}\;n=0,1,\ldots\\
\left\lfloor
\begin{array}{l}
{\boldsymbol x}_n=\prox_{\boldsymbol f}({\boldsymbol z}-
{\boldsymbol L}^*{\boldsymbol v}_n)\\[1mm]
\gamma_n\in\left[\varepsilon,2\rho-\varepsilon\right]\\[1mm]
\lambda_n\in\left[\varepsilon,1\right]\\
{\boldsymbol v}_{n+1}={\boldsymbol v}_n
+\lambda_n\big(\prox_{\gamma_n {\boldsymbol g}^*}({\boldsymbol v}_n
+\gamma_n({\boldsymbol L}{\boldsymbol x}_n
-{\boldsymbol r}))+{\boldsymbol a}_n-{\boldsymbol v}_n\big).
\end{array}
\right.\\[2mm]
\end{array}
\end{equation}
Moreover, \eqref{e:2010-06-15a} implies that
$\sum_{n\in\NN}\|{\boldsymbol a}_n\|_{\GGG}<\pinf$. 
Hence, it follows from \eqref{e:1938} and
\cite[Theorem~3.7]{Svva10} that the following hold, 
where $\boldsymbol{x}$ is the solution to 
\eqref{e:prob4}.
\begin{itemize}
\item[(a)]
$({\boldsymbol v}_n)_{n\in\NN}$ converges weakly to a solution
$\boldsymbol{v}$ to \eqref{e:prob5} and ${\boldsymbol x}=
\prox_{\boldsymbol f}({\boldsymbol z}-{\boldsymbol L}^*{\boldsymbol v})$.
\item[(b)]
$({\boldsymbol x}_n)_{n\in\NN}$ converges strongly to
${\boldsymbol x}$.
\end{itemize}
In view of \eqref{e:palawan-mai2008}, \eqref{e:palawan-mai2008-},
\eqref{e:fgL}, \eqref{e:PD}, \eqref{e:L*}, \eqref{e:2010-06-09c}, 
and \eqref{e:2010-06-09d}, items (a) and (b) provide respectively 
items \ref{t:2i} and \ref{t:2ii}. 
\end{proof}

\begin{remark}
\label{r:2010-06-18}
Let us consider Problem~\ref{prob:1} in the special case when
$(\forall i\in\{1,\ldots,m\})$ $\GG_i=\HH$, $L_i=\Id$, and $r_i=0$.
Then \eqref{e:prob1} reduces to 
\begin{equation}
\label{e:prob24}
\underset{x\in\HH}{\mathrm{minimize}}\;\;
\sum_{i=1}^m\omega_ig_i(x)+\frac12\|x-z\|^2.
\end{equation}
Now let us implement Algorithm~\ref{algo:2} with 
$\gamma_n\equiv 1$, $\lambda_n\equiv 1$, 
$a_{i,n}\equiv 0$, and $v_{i,0}\equiv 0$.
The iteration process resulting from \eqref{e:main2} can be 
written as 
\begin{equation}
\label{e:main23}
\begin{array}{l}
\operatorname{Initialization}\\
\left\lfloor
\begin{array}{l}
x_0=z\\
\operatorname{For}\;i=1,\ldots,m\\
\left\lfloor
\begin{array}{l}
v_{i,0}=0\\
\end{array}
\right.\\[1mm]
\end{array}
\right.\\[4mm]
\operatorname{For}\;n=0,1,\ldots\\
\left\lfloor
\begin{array}{l}
\operatorname{For}\;i=1,\ldots,m\\
\left\lfloor
\begin{array}{l}
v_{i,n+1}=\prox_{g^*_i}(x_n+v_{i,n}).
\end{array}
\right.\\[1mm]
x_{n+1}=z-\sum_{i=1}^m\omega_iv_{i,n+1}.
\end{array}
\right.\\[2mm]
\end{array}
\end{equation}
For every $i\in\{1,\ldots,m\}$ and $n\in\NN$, set
$z_{i,n}=x_n+v_{i,n}$. Then \eqref{e:main23} yields
\begin{equation}
\label{e:main24}
\begin{array}{l}
\operatorname{Initialization}\\
\left\lfloor
\begin{array}{l}
x_0=z\\
\operatorname{For}\;i=1,\ldots,m\\
\left\lfloor
\begin{array}{l}
z_{i,0}=z\\
\end{array}
\right.\\[1mm]
\end{array}
\right.\\[5mm]
\operatorname{For}\;n=0,1,\ldots\\
\left\lfloor
\begin{array}{l}
x_{n+1}=z-\sum_{i=1}^m\omega_i\prox_{g^*_i}z_{i,n}\\
\operatorname{For}\;i=1,\ldots,m\\
\left\lfloor
\begin{array}{l}
z_{i,n+1}=x_{n+1}+\prox_{g^*_i}z_{i,n}.
\end{array}
\right.\\[0mm]
\end{array}
\right.\\[6mm]
\end{array}
\end{equation}
Next we observe that
$(\forall n\in\NN)$ $\sum_{i=1}^m\omega_iz_{i,n}=z$. 
Indeed, the identity is clearly satisfied for $n=0$ and,
for every $n\in\NN$, \eqref{e:main24} yields
$\sum_{i=1}^m\omega_iz_{i,n+1}=x_{n+1}+\sum_{i=1}^m\omega_i
\prox_{g^*_i}z_{i,n}=(z-\sum_{i=1}^m\omega_i\prox_{g^*_i}z_{i,n})
+\sum_{i=1}^m\omega_i\prox_{g^*_i}z_{i,n}=z$.
Thus, invoking \eqref{e:2010-06-15b} with $\gamma=1$,  
we can rewrite \eqref{e:main24} as
\begin{equation}
\label{e:main25}
\begin{array}{l}
\operatorname{Initialization}\\
\left\lfloor
\begin{array}{l}
x_0=z\\
\operatorname{For}\;i=1,\ldots,m\\
\left\lfloor
\begin{array}{l}
z_{i,0}=z\\
\end{array}
\right.\\[1mm]
\end{array}
\right.\\[5mm]
\operatorname{For}\;n=0,1,\ldots\\
\left\lfloor
\begin{array}{l}
x_{n+1}=\sum_{i=1}^m\omega_i\prox_{g_i}z_{i,n}\\
\operatorname{For}\;i=1,\ldots,m\\
\left\lfloor
\begin{array}{l}
z_{i,n+1}=x_{n+1}+z_{i,n}-\prox_{g_i}z_{i,n}.
\end{array}
\right.\\[0mm]
\end{array}
\right.\\[6mm]
\end{array}
\end{equation}
This is precisely the Dykstra-like algorithm proposed in 
\cite[Theorem~4.2]{Joca09} for computing
$\prox_{\sum_{i=1}^m\omega_ig_i}z$ (which itself extends the
classical parallel Dykstra algorithm for projecting $z$ onto an 
intersection of closed convex sets \cite{Baus94,Gaff89}). Hence,
Algorithm~\ref{algo:2} can be viewed as an extension of
this algorithm, which was derived and analyzed with different
techniques in \cite{Joca09}.
\end{remark}

\section{Applications}
\label{sec:3}

As noted in the Introduction, special cases of Problem~\ref{prob:1} 
have already been considered in the literature under certain
restrictions on the number $m$ of composite functions, the 
complexity of the linear operators $(L_i)_{1\leq i\leq m}$, 
and/or the smoothness of the potentials $(g_i)_{1\leq i\leq m}$
(one will find specific applications in 
\cite{Cham05,Svva10,Banf10,Smms05,Demo09,Pott93} 
and the references therein). The proposed framework makes it 
possible to remove these restrictions simultaneously. In this 
section, we provide two illustrations.

\subsection{Best approximation from an intersection of composite 
convex sets}

In this section, we consider the problem of finding the best
approximation $P_Dz$ to a point $z\in\HH$ from a closed convex 
subset $D$ of $\HH$ defined as an intersection of affine inverse 
images of closed convex sets. 

\begin{problem}
\label{prob:3}
Let $z\in\HH$ and, for every $i\in\{1,\ldots,m\}$, let 
$(\GG_i,\|\cdot\|_{\GG_i})$ be a real Hilbert space, let 
$r_i\in\GG_i$, let $C_i$ be a nonempty closed convex subset of
$\GG_i$, and let $0\neq L_i\in\BL(\HH,\GG_i)$. The problem is to 
\begin{equation}
\label{e:prob3}
\underset{x\in D}{\mathrm{minimize}}\;\|x-z\|,
\quad\text{where}\quad
D=\bigcap_{i=1}^m\menge{x\in\HH}{L_ix\in r_i+C_i}.
\end{equation}
\end{problem}

In view of \eqref{e:iota}, Problem~\ref{prob:3} is a special case of 
Problem~\ref{prob:1}, where $(\forall i\in\{1,\ldots,m\})$
$g_i=\iota_{C_i}$ and $\omega_i=1/m$. 
It follows that, for every
$i\in\{1,\ldots,m\}$ and every $\gamma\in\RPP$, $\prox_{\gamma g_i}$ 
reduces to the projector $P_{C_i}$ onto $C_i$. Hence, using
\eqref{e:2010-06-15b}, we can rewrite Algorithm~\ref{algo:2} 
in the following form, where we have set 
$c_{i,n}=-\gamma_n^{-1}a_{i,n}$ for simplicity.

\begin{algorithm}
\label{algo:3}
For every $i\in\{1,\ldots,m\}$, let $(c_{i,n})_{n\in\NN}$ be a sequence 
in $\GG_i$.
\begin{equation}
\label{e:main3}
\begin{array}{l}
\operatorname{Initialization}\\
\left\lfloor
\begin{array}{l}
\rho=\big(\max_{1\leq i\leq m}\|L_i\|\big)^{-2}\\[1mm]
\varepsilon\in\left]0,\min\{1,\rho\}\right[\\[1mm]
\operatorname{For}\;i=1,\ldots,m\\
\left\lfloor
\begin{array}{l}
v_{i,0}\in\GG_i\\
\end{array}
\right.\\[1mm]
\end{array}
\right.\\[10mm]
\operatorname{For}\;n=0,1,\ldots\\
\left\lfloor
\begin{array}{l}
x_n=z-\sum_{i=1}^m\omega_iL_i^*v_{i,n}\\[1mm]
\gamma_n\in\left[\varepsilon,2\rho-\varepsilon\right]\\[1mm]
\lambda_n\in\left[\varepsilon,1\right]\\
\operatorname{For}\;i=1,\ldots,m\\
\left\lfloor
\begin{array}{l}
v_{i,n+1}=v_{i,n}+\gamma_n\lambda_n\Big(
L_ix_n-r_i-
P_{C_i}\big(\gamma_n^{-1}v_{i,n}+L_ix_n-r_i\big)-c_{i,n}\Big).
\end{array}
\right.\\[2mm]
\end{array}
\right.\\[2mm]
\end{array}
\end{equation}
\end{algorithm}

In the light of the above, we obtain the following application of
Theorem~\ref{t:2}\ref{t:2ii}.

\begin{corollary}
\label{c:2010-07-07a}
Suppose that
\begin{equation}
\label{e:2010-07-07a}
(r_i)_{1\leq i\leq m}\in\sri
\menge{(L_ix-y_i)_{1\leq i\leq m}}{x\in\HH, 
(y_i)_{1\leq i\leq m}\in\cart_{i=1}^m C_i}
\end{equation}
and that $(\forall i\in\{1,\ldots,m\})$ 
$\sum_{n\in\NN}\|c_{i,n}\|_{\GG_i}<\pinf$.
Then every sequence $(x_n)_{n\in\NN}$ generated by 
Algorithm~\ref{algo:3} converges strongly to the 
solution $P_Dz$ to Problem~\ref{prob:3}.
\end{corollary}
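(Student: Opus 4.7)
The plan is to recognize Problem~\ref{prob:3} as the special case of Problem~\ref{prob:1} where $g_i=\iota_{C_i}$ and $\omega_i=1/m$, observe that Algorithm~\ref{algo:3} is the corresponding specialization of Algorithm~\ref{algo:2} after applying Moreau's decomposition, then invoke Theorem~\ref{t:2}\ref{t:2ii}.

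First I would set $g_i=\iota_{C_i}\in\Gamma_0(\GG_i)$ and $\omega_i=1/m$ for every $i\in\{1,\ldots,m\}$. Then $\sum_{i=1}^m\omega_i g_i(L_ix-r_i)=\iota_D(x)$, so \eqref{e:prob3} is equivalent to $\underset{x\in\HH}{\operatorname{minimize}}\,\iota_D(x)+\tfrac12\|x-z\|^2$, whose unique solution is $P_Dz$. Since $\dom g_i=C_i$, the qualification condition \eqref{e:2010-07-07a} coincides with \eqref{e:hanoi2010-05-27a}, and the summability hypothesis on $(c_{i,n})_{n\in\NN}$ transfers to $(a_{i,n})_{n\in\NN}:=(-\gamma_n c_{i,n})_{n\in\NN}$ via the bound $\|a_{i,n}\|_{\GG_i}\leq(2\rho-\varepsilon)\|c_{i,n}\|_{\GG_i}$, so \eqref{e:2010-06-15a} is satisfied.

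Next I would check that \eqref{e:main3} is a rewriting of \eqref{e:main2}. Since $\prox_{\gamma^{-1}\iota_{C_i}}=P_{C_i}$ for every $\gamma\in\RPP$, \eqref{e:2010-06-15b} yields
\begin{equation*}
\prox_{\gamma_n g_i^*}\big(v_{i,n}+\gamma_n(L_ix_n-r_i)\big)=v_{i,n}+\gamma_n(L_ix_n-r_i)-\gamma_n P_{C_i}\big(\gamma_n^{-1}v_{i,n}+L_ix_n-r_i\big).
\end{equation*}
Substituting into the $v_{i,n+1}$ update of \eqref{e:main2} with $a_{i,n}=-\gamma_n c_{i,n}$, the $v_{i,n}$ terms cancel and a factor of $\gamma_n$ can be pulled out, yielding precisely the update in \eqref{e:main3}.

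Finally, Theorem~\ref{t:2}\ref{t:2ii} guarantees that the sequence $(x_n)_{n\in\NN}$ produced by Algorithm~\ref{algo:2}, and hence by Algorithm~\ref{algo:3}, converges strongly to the unique solution of Problem~\ref{prob:1}, which in this setting is $P_Dz$. There is no real obstacle here beyond bookkeeping; the only point requiring care is the algebraic verification that the $c_{i,n}=-\gamma_n^{-1}a_{i,n}$ rescaling produces the exact form of \eqref{e:main3}, and that the summability of $(c_{i,n})_{n\in\NN}$ implies the summability of $(a_{i,n})_{n\in\NN}$ needed for \eqref{e:2010-06-15a}, which is immediate from the boundedness of $(\gamma_n)_{n\in\NN}$.
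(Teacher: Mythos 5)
Your proposal is correct and follows exactly the route the paper intends: it specializes Problem~\ref{prob:1} with $g_i=\iota_{C_i}$ and $\omega_i=1/m$ (so the objective becomes $\iota_D+\tfrac12\|\cdot-z\|^2$ with solution $P_Dz$), identifies \eqref{e:2010-07-07a} with \eqref{e:hanoi2010-05-27a} via $\dom\iota_{C_i}=C_i$, transfers summability through $\|a_{i,n}\|_{\GG_i}=\gamma_n\|c_{i,n}\|_{\GG_i}\leq(2\rho-\varepsilon)\|c_{i,n}\|_{\GG_i}$, verifies the algebraic equivalence of \eqref{e:main3} with \eqref{e:main2} via \eqref{e:2010-06-15b}, and invokes Theorem~\ref{t:2}\ref{t:2ii}. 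The paper leaves these verifications implicit in the discussion preceding the corollary, and your write-up supplies them correctly.
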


\subsection{Nonsmooth image recovery}

A wide range of signal and image recovery problems can be modeled 
as instances of Problem~\ref{prob:1}. In this section, we focus on 
the problem of recovering an image $\overline{x}\in\HH$ from 
$p$ noisy measurements
\begin{equation}
\label{e:model}
r_i=T_i\overline{x}+s_i,\quad 1\leq i\leq p.
\end{equation}
In this model, the $i$th measurement $r_i$ lies in a Hilbert space
$\GG_i$, $T_i\in\BL(\HH,\GG_i)$ is the data formation operator,
and $s_i\in\GG_i$ is the realization of a noise process. A typical
data fitting potential in such models is the function 
\begin{equation}
\label{e:2010-07-14s}
x\mapsto\sum_{i=1}^p\omega_ig_i(T_ix-r_i),\quad\text{where}\quad
0\leq g_i\in\Gamma_0(\GG_i)\quad\text{and $g_i$ vanishes only at $0$}.
\end{equation}
The proposed framework can handle $p\geq 1$ nondifferentiable functions
$(g_i)_{1\leq i\leq p}$ as well as the incorporation of additional 
potential functions to model prior knowledge on the original image 
$\overline{x}$. In the illustration we provide below, 
the following is assumed.
\begin{itemize}
\item
The image space is $\HH={\mathrm H}_0^1(\Omega)$, where 
$\Omega$ is a nonempty bounded open domain in $\RR^2$. 
\item
$\overline{x}$ admits a sparse decomposition in an orthonormal
basis $(e_k)_{k\in\NN}$ of $\HH$. 
As discussed in \cite{Demo09,Zouh05} this property can be promoted 
by the ``elastic net'' potential 
$x\mapsto\sum_{k\in\NN}\phi_k(\scal{x}{e_k})$, where
$(\forall k\in\NN)$
$\phi_k\colon\xi\mapsto\alpha|\xi|+\beta|\xi|^2$, with
$\alpha>0$ and $\beta>0$. More general choices of suitable functions
$(\phi_k)_{k\in\NN}$ are available \cite{Siop07}.
\item
$\overline{x}$ is piecewise smooth. This property is promoted 
by the total variation potential 
$\operatorname{tv}(x)=\int_\Omega|\nabla x(\omega)|_{2}d\omega$,
where $|\cdot|_2$ denotes the Euclidean norm on $\RR^2$ 
\cite{Rudi92}.
\end{itemize}

Upon setting $g_i\equiv\|\cdot\|_{\GG_i}$ in \eqref{e:2010-07-14s}, 
these considerations lead us to the following formulation (see
\cite[Example~2.10]{Svva10} for more general nonsmooth potentials). 

\begin{problem}
\label{prob:9}
Let $\HH={\mathrm H}_0^1(\Omega)$, where $\Omega\subset\RR^2$ is
nonempty, bounded, and open, let $(\omega_i)_{1\leq i\leq p+2}$ be 
reals in $\left]0,1\right]$ such that $\sum_{i=1}^{p+2}\omega_i=1$, 
and let $(e_k)_{k\in\NN}$ be an orthonormal basis of $\HH$.
For every $i\in\{1,\ldots,p\}$, let $0\neq T_i\in\BL(\HH,\GG_i)$,
where $(\GG_i,\|\cdot\|_{\GG_i})$ is a real Hilbert space, 
and let $r_i\in\GG_i$. The problem is to 
\begin{equation}
\label{e:prob11}
\underset{x\in\HH}{\mathrm{minimize}}\;\;
\sum_{i=1}^p\omega_i\|T_ix-r_i\|_{\GG_i}+
\sum_{k\in\NN}\bigg(\omega_{p+1}|\scal{x}{e_k}|
+\frac12|\scal{x}{e_k}|^2\bigg)
+\omega_{p+2}\operatorname{tv}(x).
\end{equation}
\end{problem}

It follows from Parseval's identity that Problem~\ref{prob:9} 
is a special case of Problem~\ref{prob:1} in 
$\HH={\mathrm H}_0^1(\Omega)$ with $m=p+2$, $z=0$, and 
\begin{equation}
\label{e:2010-07-14f}
\begin{cases}
g_i=\|\cdot\|_{\GG_i}\;\text{and}\;L_i=T_i,\:
\text{if}\;\;1\leq i\leq p;\\
\GG_{p+1}=\ell^2(\NN),\,g_{p+1}=\|\cdot\|_{\ell^1},\,
r_{p+1}=0,\,\text{and}\;L_{p+1}\colon x\mapsto(\scal{x}{e_k})_{k\in\NN};\\
\GG_{p+2}={\mathrm L}^2(\Omega)\oplus {\mathrm L}^2(\Omega),\,
g_{p+2}\colon y\mapsto\int_\Omega|y(\omega)|_2d\omega,\,
r_{p+2}=0,\,\text{and}\;L_{p+2}=\nabla.
\end{cases}
\end{equation}

To implement Algorithm~\ref{algo:2}, it suffices to note that 
$L_{p+1}^*\colon(\nu_k)_{k\in\NN}\mapsto\sum_{k\in\NN}\nu_ke_k$ and
$L_{p+2}^*=-\operatorname{div}$, and to specify the proximity 
operators of the functions $(\gamma g^*_i)_{1\leq i\leq m}$,
where $\gamma\in\RPP$.
First, let $i\in\{1,\ldots,p\}$. Then $g_i=\|\cdot\|_{\GG_i}$
and therefore $g_i^*=\iota_{B_i}$, where $B_i$ is the closed unit
ball of $\GG_i$. Hence $\prox_{\gamma g_i^*}=P_{B_i}$.
Next, it follows from \eqref{e:2010-06-15b} and
\cite[Example~2.20]{Smms05} that 
$\prox_{\gamma g^*_{p+1}}\colon(\xi_k)_{k\in\NN}\mapsto
(P_{[-1,1]}\xi_k)_{k\in\NN}$.
Finally, since $g_{p+2}$ is the support function of 
the set \cite{Merc80}
\begin{equation}
\label{e:K}
K=\menge{y\in\GG_{p+2}}{|y|_2\leq 1~\text{a.e.}},
\end{equation}
$g_{p+2}^*=\iota_K$ and therefore $\prox_{\gamma g_{p+2}^*}=P_K$,
which is straightforward to compute. 
Altogether, as $\|L_{p+1}\|=1$ and $\|L_{p+2}\|\leq 1$, 
Algorithm~\ref{algo:2} assumes the following form
(since all the proximity operators can be implemented with
simple projections, we dispense with the errors terms).

\begin{algorithm}
\label{algo:4}
\begin{equation}
\label{e:main9}
\begin{array}{l}
\operatorname{Initialization}\\
\left\lfloor
\begin{array}{l}
\rho=\big(\max\{1,\|T_1\|,\ldots,\|T_p\|\}\big)^{-2}\\[1mm]
\varepsilon\in\left]0,\min\{1,\rho\}\right[\\[1mm]
\operatorname{For}\;i=1,\ldots,p\\
\left\lfloor
\begin{array}{l}
v_{i,0}\in\GG_i\\
\end{array}
\right.\\[1mm]
v_{p+1,0}=(\nu_{k,0})_{k\in\NN}\in\ell^2(\NN)\\
v_{p+2,0}\in{\mathrm L}^2(\Omega)\oplus{\mathrm L}^2(\Omega)\\ 
\end{array}
\right.\\[12mm]
\operatorname{For}\;n=0,1,\ldots\\
\left\lfloor
\begin{array}{l}
x_n=z-\sum_{i=1}^p\omega_iT_i^*v_{i,n}
-\omega_{p+1}\sum_{k\in\NN}\nu_{k,n}e_k
+\omega_{p+2}\operatorname{div}v_{p+2,n}\\[1mm]
\gamma_n\in\left[\varepsilon,2\rho-\varepsilon\right]\\[1mm]
\lambda_n\in\left[\varepsilon,1\right]\\
\operatorname{For}\;i=1,\ldots,p\\
\left\lfloor
\begin{array}{l}
v_{i,n+1}=v_{i,n}+\lambda_n\Big(
{\displaystyle\frac{v_{i,n}+\gamma_n(T_ix_n-r_i)}
{\max\{1,\|v_{i,n}+\gamma_n(T_ix_n-r_i)\|_{\GG_i}\}}}-v_{i,n}\Big)
\end{array}
\right.\\[4mm]
\operatorname{For~every}\;k\in\NN,\:
\nu_{k,n+1}=\nu_{k,n}+\lambda_n\Big(
{\displaystyle\frac{\nu_{k,n}+\gamma_n\scal{x_n}{e_k}}
{\max\{1,|\nu_{k,n}+\gamma_n\scal{x_n}{e_k}|\}}}-\nu_{k,n}\Big)\\[2mm]
\operatorname{For~almost~every}\;\omega\in\Omega,\\
\qquad v_{p+2,n+1}(\omega)=v_{p+2,n}(\omega)+\lambda_n\Big(
{\displaystyle\frac{v_{p+2,n}(\omega)+\gamma_n\nabla x_n(\omega)}
{\max\{1,|v_{p+2,n}(\omega)+\gamma_n\nabla x_n(\omega)|_2\}}}
-v_{p+2,n}(\omega)\Big).
\end{array}
\right.\\[2mm]
\end{array}
\end{equation}
\end{algorithm}

Let us establish the main convergence property of this algorithm.
\begin{corollary}
\label{c:2010-07-21}
Every sequence $(x_n)_{n\in\NN}$ generated by 
Algorithm~\ref{algo:4} converges strongly to the 
solution to Problem~\ref{prob:9}.
\end{corollary}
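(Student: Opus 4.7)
The plan is to identify Problem~\ref{prob:9} as the specialization of Problem~\ref{prob:1} obtained by the choices in \eqref{e:2010-07-14f} (with $m = p+2$, $z = 0$, and $a_{i,n} \equiv 0$), Algorithm~\ref{algo:4} as the corresponding specialization of Algorithm~\ref{algo:2}, and then invoke Theorem~\ref{t:2}\ref{t:2ii}. For the objective, Parseval's identity gives $\sum_{k\in\NN}|\scal{x}{e_k}|^2 = \|x\|^2$, so the quadratic Parseval sum in \eqref{e:prob11} provides the proximal regularizer $\frac12\|x\|^2$ of \eqref{e:prob1} while the $\ell^1$ part equals $\omega_{p+1} g_{p+1}(L_{p+1}x)$ and the TV part equals $\omega_{p+2}g_{p+2}(L_{p+2}x)$. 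The translation of \eqref{e:main2} into \eqref{e:main9} is then read off from the $L_i^*$ and $\prox_{\gamma g_i^*}$ already computed just before Algorithm~\ref{algo:4}: the $\GG_i$-ball normalization for $i\leq p$, the $[-1,1]$-clipping on each $\nu_{k,n}$ via \eqref{e:2010-06-15b}, and the pointwise Euclidean ball projection $P_K$ from \eqref{e:K} for the TV term. The estimates $\|L_{p+1}\| = 1$ (Parseval) and $\|L_{p+2}\| \leq 1$ (since $\HH = \mathrm{H}_0^1(\Omega)$ is equipped with the gradient norm) justify the form of $\rho$ in \eqref{e:main9}.

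Granted this identification, it remains to verify the two hypotheses of Theorem~\ref{t:2}. The summability \eqref{e:2010-06-15a} is trivial because $a_{i,n}\equiv 0$ in Algorithm~\ref{algo:4}. The qualification \eqref{e:hanoi2010-05-27a} is the only delicate point and is where I would concentrate. Among the $g_i$'s, only $g_{p+1}$ has a proper effective domain in $\GG_i$, namely $\dom g_{p+1} = \ell^1(\NN) \subsetneq \ell^2(\NN) = \GG_{p+1}$: for $i\leq p$ the function $g_i = \|\cdot\|_{\GG_i}$ is finite everywhere, and for $i = p+2$ Cauchy--Schwarz on the bounded domain $\Omega$ gives $\int_\Omega |y|_2\,d\omega \leq |\Omega|^{1/2}\|y\|_{\GG_{p+2}}$, so $\dom g_{p+2} = \GG_{p+2}$. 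I would then show that the set appearing in \eqref{e:hanoi2010-05-27a} is the whole product $\GG_1\times\cdots\times\GG_{p+2}$. By Parseval, $L_{p+1}\colon\HH\to\ell^2(\NN)$ is an isometric bijection; hence, for any target $(w_i)_{1\leq i\leq p+2}$ in the product, the vector $x = L_{p+1}^{-1}w_{p+1}\in\HH$ is well defined, and setting $y_i = L_ix - w_i$ gives $y_{p+1} = 0 \in \dom g_{p+1}$ while $y_i \in \GG_i = \dom g_i$ for $i \neq p+1$. Thus $(w_i) = (L_ix - y_i)$ belongs to the set, which is therefore the whole product space, so its strong relative interior contains $\boldsymbol{r}$ a fortiori.

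The main obstacle is precisely this $\ell^1$-versus-$\ell^2$ domain gap for $g_{p+1}$, resolved by the orthonormal-basis surjectivity of $L_{p+1}$. With both hypotheses checked, Theorem~\ref{t:2}\ref{t:2ii} then delivers strong convergence of $(x_n)_{n\in\NN}$ to the (unique) solution of Problem~\ref{prob:9}.
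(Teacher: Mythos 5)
Your proposal is correct and follows essentially the same route as the paper: reduce to Problem~\ref{prob:1} via \eqref{e:2010-07-14f}, note that the error terms vanish, and verify the qualification condition \eqref{e:hanoi2010-05-27a} by showing that the constraint set is the whole product space. Your explicit choice $x=L_{p+1}^{-1}w_{p+1}$ with $y_{p+1}=0$ just makes transparent the surjectivity argument that the paper's computation of $S$ uses implicitly.
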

\begin{proof}
In view of the above discussion and of Theorem~\ref{t:2}\ref{t:2ii}, 
it remains to check that \eqref{e:hanoi2010-05-27a} is satisfied.
Set $S=\menge{(L_ix-y_i)_{1\leq i\leq m}}{x\in\HH, 
(y_i)_{1\leq i\leq m}\in\cart_{i=1}^m\dom g_i}$. 
We have $\dom g_i=\GG_i$ for every $i\in\{1,\ldots,p\}$,
$\dom g_{p+1}=\ell^1(\NN)$, and
$\dom g_{p+2}={\mathrm L}^2(\Omega)\oplus {\mathrm L}^2(\Omega)$.
Consequently,
\begin{align}
S
&=\Big\{(T_1x-y_1,\ldots,T_px-y_p,(\scal{x}{e_k}-\eta_k)_{k\in\NN},
\nabla x-y_{p+2}~\Big|~\nonumber\\
&\mbox{}\hskip 28mm x\in\HH, 
(y_i)_{1\leq i\leq p}\in\cart_{i=1}^p\GG_i,
(\eta_k)_{k\in\NN}\in\ell^1(\NN),
y_{p+2}\in{\mathrm L}^2(\Omega)\oplus {\mathrm L}^2(\Omega)\Big\}
\nonumber\\
&=\big(\cart_{i=1}^p\GG_i\big)\times\ell^2(\NN)
\times\big({\mathrm L}^2(\Omega)\oplus {\mathrm L}^2(\Omega)\big)
\nonumber\\
&=\cart_{i=1}^m\GG_i.
\end{align}
Hence, we trivially have $(r_1,\ldots,r_p,0,0)\in\sri S$.
\end{proof}

Let us emphasize that a novelty of the above variational framework is
to  perform total variation image recovery in the presence of several 
nondifferentiable composite terms, with guaranteed strong convergence 
to the solution to the problem, and with elementary steps in the form 
of simple projections. 
The finite-dimensional version of the algorithm can easily be obtained 
by discretizing the operators $\nabla$ and $\operatorname{div}$ as
in \cite{Cham05} (see also \cite[Section~4.4]{Svva10} for variants 
of the total variation potential).


\begin{thebibliography}{99}

\bibitem{Alar06}
P. Alart, O. Maisonneuve, and R. T. Rockafellar (Eds.),
{\em Nonsmooth Mechanics and Analysis --
Theoretical and Numerical Advances.}
Springer-Verlag, New York, 2006.

\bibitem{Baus94}
H. H. Bauschke and J. M. Borwein,
Dykstra's alternating projection algorithm for two sets,
{\em J. Approx. Theory,} 
vol. 79, pp. 418--443, 1994.

\bibitem{Cham05} 
A. Chambolle, 
Total variation minimization and a class of binary MRF model, 
{\em Lecture Notes in Comput. Sci.,} 
vol. 3757, pp 136--152, 2005.

\bibitem{Joca09} 
P. L. Combettes, Iterative construction of the resolvent of a
sum of maximal monotone operators,
{\em J. Convex Anal.,} 
vol. 16, pp. 727--748, 2009.

\bibitem{Svva10} 
P. L. Combettes, D\hspace{-1.6ex}\raise 0.4ex\hbox{-}\hspace{.8ex}inh 
D\~ung, and B. C. V\~u, Dualization of signal recovery problems,
{\em Set-Valued Variational Anal.,} to appear.
{\ttfamily http://www.ann.jussieu.fr/$\sim$plc/svva1.pdf}

\bibitem{Siop07}
P. L. Combettes and J.-C. Pesquet,
Proximal thresholding algorithm for minimization over orthonormal 
bases,
{\em SIAM J. Optim.,}
vol. 18, pp. 1351--1376, 2007.

\bibitem{Banf10}
P. L. Combettes and J.-C. Pesquet, 
Proximal splitting methods in signal processing, in:
{\em Fixed-Point Algorithms for Inverse Problems in Science 
and Engineering,}
(H. H. Bauschke, R. Burachik, P. L. Combettes, V. Elser,
D. R. Luke, H. Wolkowicz, Eds.).
Springer-Verlag, New York, 2010.
{\ttfamily http://www.ann.jussieu.fr/$\sim$plc/prox.pdf}

\bibitem{Smms05}
P. L. Combettes and V. R. Wajs, 
Signal recovery by proximal forward-backward splitting,
{\em Multiscale Model. Simul.,} 
vol. 4, pp. 1168--1200, 2005.

\bibitem{Demo09}
C. De Mol, E. De Vito, and L. Rosasco, 
Elastic-net regularization in learning theory, 
{\em J. Complexity,}
vol. 25, pp. 201--230, 2009. 

\bibitem{Deut01}
F. Deutsch,
{\em Best Approximation in Inner Product Spaces.} 
Springer-Verlag, New York, 2001.

\bibitem{Gaff89} 
N. Gaffke and R. Mathar, 
A cyclic projection algorithm via duality,
{\em Metrika,}
vol. 36, pp. 29--54, 1989.

\bibitem{Merc80} 
B. Mercier,  
{\em In\'equations Variationnelles de la M\'ecanique}
(Publications Math\'ematiques d'Orsay, no. 80.01).
Orsay, France, Universit\'e de Paris-XI, 1980. 

\bibitem{Mor62b} 
J.-J. Moreau,
Fonctions convexes duales et points proximaux dans un espace 
hilbertien,
{\em C. R. Acad. Sci. Paris S\'er. A Math.,}
vol. 255, pp. 2897--2899, 1962.

\bibitem{More65} 
J.-J. Moreau, 
Proximit\'e et dualit\'e dans un espace hilbertien,
{\em Bull. Soc. Math. France,}
vol. 93, pp. 273-299, 1965.

\bibitem{Pott93} 
L. C. Potter and K. S. Arun, 
A dual approach to linear inverse problems with convex constraints,
{\em SIAM J. Control Optim.,}
vol. 31, pp. 1080--1092, 1993.

\bibitem{Rock09} 
R. T. Rockafellar and R. J. B. Wets, 
{\em Variational Analysis,} 3rd printing.
Springer-Verlag, New York, 2009.

\bibitem{Rudi92} 
L. I. Rudin, S. Osher, and E. Fatemi,
Nonlinear total variation based noise removal algorithms,
{\em Physica D,} 
vol. 60, pp. 259--268, 1992.

\bibitem{Tsen91} 
P. Tseng, Applications of a splitting algorithm to
decomposition in convex programming and variational inequalities,
{\em SIAM J. Control Optim.,}
vol. 29, pp. 119--138, 1991.

\bibitem{Zali02}
C. Z\u{a}linescu,
{\em Convex Analysis in General Vector Spaces,}
World Scientific, River Edge, NJ, 2002.

\bibitem{Zouh05} 
H. Zou and T. Hastie, 
Regularization and variable selection via the elastic net,
{\em J. R. Stat. Soc. Ser. B Stat. Methodol.,}
vol. 67, pp. 301--320, 2005. 
\end{thebibliography}
\end{document}